\def\gg{\mathfrak{g}}
\def\gh{\mathfrak{h}}
\def\gm{\mathfrak{m}}
\def\C{\mathbb{C}}
\def\R{\mathbb{R}}
\def\cD{\mathcal{D}}
\def\Ad{{\rm Ad}}
\def\ad{{\rm ad}\,}
\newtheorem{theorem}[equation]{Theorem}
\newtheorem{lemma}[equation]{Lemma}
\newtheorem{corollary}[equation]{Corollary}
\newtheorem{proposition}[equation]{Proposition}
\def\sideremark#1{\ifvmode\leavevmode\fi\vadjust{\vbox to0pt{\vss
 \hbox to 0pt{\hskip\hsize\hskip1em
\vbox{\hsize2cm\tiny\raggedright\pretolerance10000 
 \noindent #1\hfill}\hss}\vbox to8pt{\vfil}\vss}}} 
\newcommand \<{\langle}
\renewcommand \>{\rangle}
\newcommand \ip{\<\cdot,\cdot\>}
\newcommand \crown{{\bf{cr}}}
\begin{document}

\title{Families of Geodesic Orbit Spaces and Related Pseudo--Riemannian 
Manifolds}

\author{Joseph A. Wolf}\thanks{Research partially supported by a Simons
Foundation grant}
\address{Department of Mathematics \\ University of California, Berkeley \\
	CA 94720--3840, U.S.A.} \email{jawolf@math.berkeley.edu}

\date{file last edited 02 November 2022}

\subjclass[2010]{22E45, 43A80, 32M15, 53B30, 53B35}

\keywords{geodesic orbit space, real form family, weakly symmetric space, 
naturally reductive space, commutative space, D'Atri space, pseudo--riemannian 
manifold, homogeneous manifold}

\begin{abstract}
Two homogeneous pseudo--riemannian manifolds $(G/H, ds^2)$ and
$(G'/H', ds'^2)$ belong to the same {\em real form family} if
their complexifications $(G_\C/H_\C, ds_\C^2)$ and $(G_\C'/H_\C', ds_\C'^2)$
are isometric.  The point is that in many cases a particular space
$(G/H, ds^2)$ has interesting properties, and 
those properties hold for the spaces in its real form family.
Here we prove that if $(G/H, ds^2)$ is a geodesic orbit space
with a reductive decomposition $\gg = \gh + \gm$, then the same holds 
all the members of its real form family.  In particular our understanding
of compact geodesic orbit riemannian manifolds gives information on 
geodesic orbit pseudo--riemannian manifolds.  We also prove 
similar results for naturally reductive spaces, for commutative 
spaces, and in most cases for weakly symmetric spaces.
We end with a discussion of inclusions of these real form families,
a discussion of D'Atri spaces, and a number of open problems.
\end{abstract}

\maketitle

\section{Introduction}\label{sec1}
\setcounter{equation}{0}

Let $(G/H,ds^2)$ be a homogeneous pseudo--riemannian manifold.
For convenience of exposition we assume that $M$ and $G$ are connected.
We have the complexification $(G_\C/H_\C,ds_\C^2)$ where $ds^2$
is extended by complex bilinearity on every tangent space.  
$(G_\C/H_\C,ds_\C^2)$
is pseudo--riemannian of signature $(n,n)$ where $n = \dim M$.
The  {\bf real form family} of $(G/H,ds^2)$ consists of
all pseudo--riemannian manifolds 
$(G'/H',ds'^2)$ with the same (up to isometry) complexification 
$(G_\C/H_\C,ds_\C^2)$.  Following \cite{WC2018} we write
$\{\{(G/H,ds^2)\}\}$ for the real form family of $(G/H,ds^2)$.

There is a little bit of ambiguity in the literature.  In this paper 
$(G_\C/H_\C,ds_\C^2)$ does not belong to $\{\{(G/H,ds^2)\}\}$;
we refer to $(G_\C/H_\C,ds_\C^2)$ as the {\bf crown} of 
$\{\{(G/H,ds^2)\}\}$ and write $(G_\C/H_\C,ds_\C^2) = 
\crown\{\{(G/H,ds^2)\}\}$.

From now on,
suppose that we have a reductive decomposition: $\gg = \gh + \gm$ with
$\Ad(H)\gm = \gm$.  Then $\gm$ represents the tangent space at 
$o = 1H$ and $\ip$ denotes the inner product on $\gm$
defined by $ds^2$.

If $(G'/H',ds'^2) \in \{\{(G/H,ds^2)\}\}$ then
$\{\{(G/H,ds^2)\}\} = \{\{(G'/H',ds'^2)\}\}$.  In particular 
$(G/H,ds^2)$ and $(G'/H',ds'^2)$ have the same (up to isometry)
complexification,
$\crown\{\{(G/H,ds^2)\}\} = \crown\{\{(G'/H',ds'^2)\}\}$.  Further,
we have a reductive decomposition $\gg' = \gh' + \gm'$ stable under 
an involutive isometry $\theta$ of $G'$ such that $(G,H)$ is the 
corresponding real form of $(G',H')$.  This is reversible because 
$\theta$ defines
on involutive isometry of $(G/H,ds^2)$ that preserves $H$ and
such that $(G',H')$ is the corresponding real form of $(G_\C,H_\C)$.
Thus ``belongs to the real form family'' is an equivalence relation.

A nonzero element $\eta \in \gg$ is a 
{\bf geodesic vector} (at $o$)
if $t \mapsto \exp(t\eta)o$ is a geodesic.  A geodesic $t \mapsto \gamma(t)$
is called {\bf homogeneous} if it comes from a geodesic vector as above.
$(G/H,ds^2)$ is a {\bf geodesic orbit space} or {\bf GO space} if 
every geodesic on $(G/H,ds^2)$ is homogeneous.   

\begin{proposition}\label{char-go} {\rm {\bf Geodesic Lemma}}\;
Let $(G/H,ds^2)$ be a homogeneous pseudo--riemannian manifold with a
reductive decomposition $\gg = \gh + \gm$, $[\gh,\gm] \subset \gm$.  Then
$(G/H,ds^2)$ is a geodesic orbit space if and only if
\begin{equation}\label{basic}
\text{given } \xi \in \gm \text{ there exist } \alpha \in \gh \text{ and }
c \in \R \text{ such that } \langle [\xi + \alpha,\zeta]_\gm,\xi \rangle = 
c\,\langle \xi,\zeta \rangle \text{ for all } \zeta \in \gm.
\end{equation}
Further, if $c \ne 0$ then the geodesic is null.
\end{proposition}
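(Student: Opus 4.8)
The plan is to reduce the (pre)geodesic condition to a purely algebraic identity on $\gm$ by evaluating the covariant derivative of the velocity field at the origin $o$, and then to let $\xi$ range over all of $\gm$. Throughout I would use that $G$ acts by isometries, so the Levi--Civita connection is $G$--invariant, and that $\langle\cdot,\cdot\rangle$ is nondegenerate on $\gm\cong T_o(G/H)$. First I would fix $\eta=\xi+\alpha$ with $\xi\in\gm$, $\alpha\in\gh$, set $g(t)=\exp(t\eta)$ and $\gamma(t)=g(t)o$, and observe that $\dot\gamma$ is the restriction to $\gamma$ of the Killing (fundamental) vector field $\eta^*$ generated by $\eta$, so that $\dot\gamma(t)=dg(t)\,\xi$ and in particular $\dot\gamma(0)=\xi$. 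Since $g(t)$ is an isometry carrying $o$ to $\gamma(t)$ and intertwining $\dot\gamma(0)$ with $\dot\gamma(t)$, the vector $\nabla_{\dot\gamma}\dot\gamma$ at $\gamma(t)$ is $dg(t)$ applied to its value at $o$. Hence $t\mapsto\exp(t\eta)o$ is a geodesic, allowing the affine reparametrization that is standard for null curves, exactly when $\nabla_{\dot\gamma}\dot\gamma|_o=-c\,\xi$ for some $c\in\R$, in which case $\nabla_{\dot\gamma}\dot\gamma=-c\,\dot\gamma$ along all of $\gamma$ with the \emph{same} constant. This already explains why a single $c$ governs the curve.

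Next I would compute $\nabla_{\dot\gamma}\dot\gamma|_o$ algebraically. Pairing against an arbitrary $\zeta\in\gm$ and using the Killing identity $\langle\nabla_{\eta^*}\eta^*,\zeta^*\rangle=-\langle\nabla_{\zeta^*}\eta^*,\eta^*\rangle$ together with the standard reductive connection formula at $o$ (expressing $\nabla_{\zeta^*}\eta^*|_o$ through $\gm$--brackets, with the $\gh$--part $\alpha$ contributing the isotropy term $[\alpha,\,\cdot\,]$, which is skew on $\gm$ because $\Ad(H)$ preserves the metric), the whole expression collapses, up to one global sign that can be absorbed into $c$, to $\langle\nabla_{\dot\gamma}\dot\gamma|_o,\zeta\rangle=-\langle[\xi+\alpha,\zeta]_\gm,\xi\rangle$. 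By nondegeneracy of $\langle\cdot,\cdot\rangle$ on $\gm$ the relation $\nabla_{\dot\gamma}\dot\gamma|_o=-c\,\xi$ is therefore equivalent to $\langle[\xi+\alpha,\zeta]_\gm,\xi\rangle=c\,\langle\xi,\zeta\rangle$ for all $\zeta\in\gm$, which is precisely \eqref{basic}. Thus $\eta=\xi+\alpha$ is a geodesic vector if and only if \eqref{basic} holds for that $\xi$.

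I would then assemble the equivalence. By homogeneity every geodesic is, up to the $G$--action, one through $o$, and a geodesic through $o$ with initial velocity $\xi$ is homogeneous precisely when $\xi=\eta^*_o$ for some geodesic vector $\eta=\xi+\alpha$; hence $(G/H,ds^2)$ is a geodesic orbit space if and only if for every $\xi\in\gm$ there exist $\alpha\in\gh$ and $c\in\R$ satisfying \eqref{basic}. For the final assertion I would differentiate the speed: since $g(t)$ is an isometry, $\langle\dot\gamma,\dot\gamma\rangle\equiv\langle\xi,\xi\rangle$ is constant, so $0=\tfrac{d}{dt}\langle\dot\gamma,\dot\gamma\rangle=2\langle\nabla_{\dot\gamma}\dot\gamma,\dot\gamma\rangle=-2c\,\langle\xi,\xi\rangle$, whence $c\ne0$ forces $\langle\xi,\xi\rangle=0$, i.e. the geodesic is null.

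The main obstacle will be the sign and normalization bookkeeping in the connection formula, especially the isotropy contribution of $\alpha\in\gh$ and the passage between fundamental and invariant frames; fortunately, because $c$ is only required to exist and enters \eqref{basic} and the null computation symmetrically, the overall sign is immaterial and need not be pinned down. The genuinely new point compared with the Riemannian Geodesic Lemma is conceptual rather than computational: in the indefinite setting one cannot renormalize to unit speed, so the reparametrization constant $c$ survives and must be carried through, whereas in the Riemannian case $\langle\xi,\xi\rangle\ne0$ immediately forces $c=0$.
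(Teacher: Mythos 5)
Your argument is correct, but there is nothing in the paper itself to compare it against: the paper deliberately gives no proof of the Geodesic Lemma, stating that the Riemannian case is in \cite{KV1991} and that ``the correct mathematical formulation with the proof'' in arbitrary signature is in \cite{DK2007}. What you have reconstructed is essentially that standard proof, and its three pillars are all present and correctly handled: (i) equivariance of the Levi--Civita connection under $g(t)=\exp(t\eta)$, which forces $\nabla_{\dot\gamma}\dot\gamma=\lambda\,\dot\gamma$ with a single global constant and explains both why one $c$ governs the whole curve and why ``geodesic'' must be read as ``geodesic up to a possibly non-affine reparametrization'' (exactly the subtlety \cite{DK2007} was written to settle); (ii) the identity $\langle\nabla_{\dot\gamma}\dot\gamma|_o,\zeta\rangle=-\langle[\xi+\alpha,\zeta]_\gm,\xi\rangle$, which is correct as you state it --- it drops out of the Koszul formula specialized to Killing fields, $2\langle\nabla_XY,Z\rangle=\langle[X,Y],Z\rangle+\langle[X,Z],Y\rangle+\langle[Y,Z],X\rangle$ with $X=Y=\eta^*$, $Z=\zeta^*$, together with $[\eta^*,\zeta^*]=-[\eta,\zeta]^*$, and, as you observe, the overall sign is immaterial because $c$ and $\alpha$ are existentially quantified; (iii) nondegeneracy of $\langle\cdot,\cdot\rangle$ on $\gm$, which converts the vector equation at $o$ into \eqref{basic}. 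Two minor points deserve to be made explicit in a final write-up. First, the step ``GO if and only if every $\xi\in\gm$ is the $\gm$-part of a geodesic vector'' tacitly re-centres a homogeneous geodesic at $o$ (if $\gamma(s)=\exp((s-t_0)\eta)\gamma(t_0)$ then in fact $\gamma(s)=\exp(s\eta)o$) and rescales the generator so that $\eta_\gm=\xi$; this is routine but is where transitivity enters. Second, the final null assertion also has a one-line algebraic proof needing no connection at all: put $\zeta=\xi$ in \eqref{basic} and use $\langle[\alpha,\xi]_\gm,\xi\rangle=0$, since $\ad(\alpha)$ is skew on $\gm$ by $\Ad(H)$-invariance of the metric, to get $c\,\langle\xi,\xi\rangle=0$.
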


For the notion of homogeneous geodesic and the formula (\ref{basic}) 
characterizing geodesic vectors in the riemannian case see \cite{KV1991}.
The pseudo--riemannian case appeared in \cite{FMP2005} and \cite{P2006},
but without a proof. The correct mathematical formulation with the proof was
given in \cite{DK2007}.

We are going to study the structure of real form families of geodesic 
orbit spaces (in \S 3) using the form of the Geodesic Lemma.  Then we look
at corresponding structural matters for naturally reductive spaces (in \S 4),
for commutative spaces (in \S 5), for weakly symmetric spaces (in \S 6),
and for D'Atri spaces (in \$ 7), ending with a list of open problems.

\section{The Moduli Spaces}\label{sec2}
\setcounter{equation}{0}

The moduli spaces $\Omega$ and $\Omega_\C$ will allow us to carry the
$GO$ property between various spaces in a real form family.  Define
real and complex polynomials
\begin{equation}\label{def-poly}
\begin{aligned}
&\varphi: \gm + \gh + \gm \to \R \text{ by }
	\varphi(\xi,\alpha,\zeta) = \< [\xi + \alpha,\zeta]_\gm,\xi \>
		- c\< \xi,\zeta\> \text{ and } \\
&\varphi_\C: \gm_\C + \gh_\C + \gm_\C \to \C \text{ by }
        \varphi_\C(\xi,\alpha,\zeta) = \< [\xi + \alpha,\zeta]_{\gm_\C},\xi \>
                - c\< \xi,\zeta\>
\end{aligned}
\end{equation}
where $\ip$ extends from $\gm + \gm$ to $\gm_\C + \gm_\C$ by complex 
bilinearity. Further, $c = 0$ whenever $\< \xi,\xi\> \ne 0$, in other words
$c \ne 0$ only when $\xi$ is null.  Note that $\varphi_\C(\xi,\alpha,\zeta)$ 
is linear both in $\zeta$ and in $\alpha$, and if $c = 0$ it is quadratic in 
$\xi$.  Define subvarieties
\begin{equation}\label{def-var}
\begin{aligned}
&\Omega = \{(\xi,\alpha) \in (\gm + \gh) \mid
	\varphi(\xi,\alpha,\zeta) = 0 \text{ for every } \zeta \in \gm\}
	\text{ and }\\
&\Omega_\C = \{(\xi,\alpha) \in (\gm_\C + \gh_\C) \mid
	\varphi_\C(\xi,\alpha,\zeta) = 0 \text{ for every } \zeta \in \gm_\C\}.
\end{aligned}
\end{equation}
As one might guess from the notation we have

\begin{proposition}\label{realform}
The real affine variety $\Omega = \Omega_\C \cap (\gm + \gh + \gm)$, and
it is a real form of the complex affine variety $\Omega_\C$.  In other 
words $\Omega_\C$ is the complexification of $\Omega$.
In particular, if $f$ is a holomorphic function on $\Omega_\C$ and
$f|_\Omega \equiv 0$, then $f \equiv 0$.
\end{proposition}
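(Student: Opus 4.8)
The plan is to show that $\Omega_\C$ is cut out by polynomials with real coefficients, so that it is defined over $\R$ with real locus exactly $\Omega$, and then to deduce the vanishing statement by analytic continuation from the real form. First I would fix an $\R$-basis $e_1,\dots,e_n$ of $\gm$ together with an $\R$-basis of $\gh$; these serve simultaneously as $\C$-bases of $\gm_\C$ and $\gh_\C$, giving real linear coordinates on $\gm+\gh$ that extend to complex linear coordinates on $\gm_\C+\gh_\C$. The structural point is that $\varphi_\C$ is the coefficientwise complexification of $\varphi$: the structure constants of $[\cdot,\cdot]$, the Gram matrix of $\ip$, and the scalar $c$ are all real (with $c$ read as a real parameter), so $\varphi(\xi,\alpha,\zeta)$ is a polynomial with real coefficients and $\varphi_\C$ is literally the same polynomial evaluated over $\C$. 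Since $\varphi_\C$ is linear in $\zeta$, the condition ``$\varphi_\C(\xi,\alpha,\zeta)=0$ for all $\zeta\in\gm_\C$'' is equivalent to the finite real-coefficient system $q_j(\xi,\alpha):=\varphi_\C(\xi,\alpha,e_j)=0$, $1\le j\le n$.

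Next I would read off the real form structure. Let $\sigma$ be the complex conjugation of $\gm_\C+\gh_\C$ with fixed-point set $\gm+\gh$. Because each $q_j$ has real coefficients, $q_j(\sigma(x))=\overline{q_j(x)}$, so $\Omega_\C=\{q_1=\dots=q_n=0\}$ is $\sigma$-stable and its fixed-point set is $\Omega_\C^\sigma=\Omega_\C\cap(\gm+\gh)=\{(\xi,\alpha)\in\gm+\gh:\varphi(\xi,\alpha,e_j)=0 \text{ for all } j\}=\Omega$. This is exactly the statement that $\Omega$ is the set of real points of $\Omega_\C$ and that $\Omega_\C$ is the complexification of $\Omega$, and it gives the asserted equality.

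For the final clause I would argue by the identity principle along the real form. Let $p\in\Omega$ be a smooth point of $\Omega_\C$. Since $\Omega=\Omega_\C^\sigma$, near $p$ the set $\Omega$ is a totally real submanifold of $\Omega_\C$ with $\dim_\R\Omega=\dim_\C\Omega_\C$; choosing local holomorphic coordinates on $\Omega_\C$ in which $\Omega$ is the real slice, a holomorphic $f$ with $f|_\Omega\equiv0$ has all Taylor coefficients zero, hence vanishes on a neighbourhood of $p$ and, by connectedness of the smooth locus of an irreducible component, on the whole irreducible component of $\Omega_\C$ through $p$. The main obstacle is the passage from ``near a smooth real point'' to ``everywhere on $\Omega_\C$'': one needs that $\Omega$ meets every irreducible component of $\Omega_\C$ in a Zariski-dense set, equivalently that the real points are Zariski dense in $\Omega_\C$. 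This is the one genuinely geometric input — it can fail when the real points of a real variety lie inside the singular locus — and I would supply it from the fact, just established, that $\Omega_\C$ is the complexification of $\Omega$, invoking the standard density of the real points of a variety defined over $\R$ in its complexification.
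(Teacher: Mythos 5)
Your first two steps are correct and coincide with the paper's proof: Wolf likewise uses linearity in $\zeta$ to replace the ``for every $\zeta$'' condition by the finite system attached to a basis $\{\zeta_1,\dots,\zeta_\ell\}$ of $\gm$, observes that each real polynomial $\varphi_j$ is the restriction of the complex polynomial $\varphi_{j;\C}$ to $\gm+\gh$, and declares the Proposition immediate from this. Your conjugation formulation ($\Omega_\C$ is $\sigma$-stable with $\Omega_\C^\sigma=\Omega_\C\cap(\gm+\gh)=\Omega$) is just a more explicit version of that same observation, and it does prove the first assertion of the Proposition.

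The gap is in your final step, and it is genuine. From ``$\Omega_\C$ is cut out by real-coefficient polynomials and its real locus is $\Omega$'' one cannot conclude that $\Omega$ is Zariski dense in $\Omega_\C$, which is what your identity-principle argument needs. The ``standard density'' fact you invoke applies to the complexification defined as the Zariski closure of the real set (equivalently, the zero locus of the full ideal of real polynomials vanishing on $\Omega$); that variety can be strictly smaller than the zero locus of the given complexified equations, which is how $\Omega_\C$ is defined in (2.2). The model counterexample is $x^2+y^2=0$: real coefficients, real locus $\{0\}$, but the complex zero set is the pair of lines $y=\pm ix$, in which $\{0\}$ is nowhere Zariski dense. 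So your appeal is circular: you established the weak sense of ``defined over $\R$'' and then invoked the strong sense (density of real points), which is exactly what remained to be proved. Moreover the failure actually occurs for these moduli spaces: take $\gg$ spanned by $T,X,Y$ with $[T,X]=X$, $[T,Y]=Y$, $[X,Y]=0$, take $\gh=0$, and make $T,X,Y$ orthonormal (a solvable presentation of hyperbolic $3$--space). Writing $\xi=tT+xX+yY$, the defining equations with $c=0$ (the metric is riemannian, and allowing $c\ne 0$ on complex null vectors only adds the cone $t^2+x^2+y^2=0$, whose sole real point is the origin) are $x^2+y^2=0$, $tx=0$, $ty=0$; hence $\Omega=\R T$ while $\Omega_\C=\C T\cup\{t=0,\,y=ix\}\cup\{t=0,\,y=-ix\}$, and the holomorphic function $x|_{\Omega_\C}$ vanishes identically on $\Omega$ but not on $\Omega_\C$. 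So the step you yourself flagged as ``the one genuinely geometric input'' cannot be supplied by any argument at this level of generality; it would require either redefining $\Omega_\C$ as the Zariski closure of $\Omega$ or adding hypotheses guaranteeing that every component of $\Omega_\C$ contains a smooth real point. (For fairness: the paper's one-line ``immediate'' glosses over precisely the same point, so your write-up, by making the needed density explicit, actually exposes a gap in the published proof rather than introducing a new one.)
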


\begin{proof}
As $\varphi$ and $\varphi_\C$ are linear in $\zeta$ we can 
replace the ``every $\zeta$'' conditions in {\rm (\ref{def-var})}
by ``$\{\zeta_1, \dots , \zeta_\ell\}$'' where
$\{\zeta_1, \dots , \zeta_\ell\}$ is a basis of $\gm$.  Now
$\Omega$ is defined by the $\ell$ real polynomial functions
$\varphi_j : (\xi,\alpha) \mapsto \varphi(\xi,\alpha,\zeta_j)$
on $\gm + \gh$,
and $\Omega_\C$  is defined by the $\ell$ complex polynomial function
$\varphi_{j;\C} : (\xi,\alpha) \mapsto \varphi_\C(\xi,\alpha,\zeta_j)$
on $\gm_\C + \gh_\C$\,.  As
$\varphi_j = {\varphi_{j;\C}}|_{(\gm + \gh)}$ the Proposition is
immediate.
\end{proof}

\section{Geodesic Orbit Spaces}\label{sec3}
\setcounter{equation}{0}

We start by pinning down the moduli spaces $\Omega$ and $\Omega_\C$
for the geodesic orbit case.
From the definitions

\begin{lemma}\label{geo-vector}
In the notation of {\rm (\ref{def-var})}, 
$\xi + \alpha$ is a geodesic vector for $(G/H,ds^2)$ if and only
if $\xi + \alpha \in \Omega$, and $\xi + \alpha$ is a 
geodesic vector for the crown $\crown \{\{(G/H,ds^2)\}\}$ 
if and only if $\xi + \alpha \in \Omega_\C$.
\end{lemma}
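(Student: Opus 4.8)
The plan is to read off both equivalences directly from the pointwise geodesic--vector criterion that underlies the Geodesic Lemma (Proposition \ref{char-go}), matching it term by term against the definitions (\ref{def-poly}) and (\ref{def-var}). Although Proposition \ref{char-go} is phrased as a criterion for the whole space to be a GO space, the work in \cite{KV1991} and \cite{DK2007} actually establishes the sharper pointwise assertion: for $\xi \in \gm$ and $\alpha \in \gh$, the vector $\xi + \alpha$ is a geodesic vector for $(G/H,ds^2)$ if and only if there is a constant $c \in \R$ with $\langle [\xi+\alpha,\zeta]_\gm,\xi\rangle = c\langle\xi,\zeta\rangle$ for all $\zeta \in \gm$. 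First I would invoke this criterion; by the definition of $\varphi$ in (\ref{def-poly}), its right--hand side says exactly that $\varphi(\xi,\alpha,\zeta) = 0$ for every $\zeta \in \gm$, which by (\ref{def-var}) is the assertion $(\xi,\alpha) \in \Omega$. Since the criterion is itself an equivalence, this already yields the first claim in both directions.

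Next I would reconcile the constant $c$ with the convention built into (\ref{def-poly}). For $\xi \neq 0$ the nondegeneracy of $\ip$ on $\gm$ produces a $\zeta$ with $\langle\xi,\zeta\rangle \neq 0$, so any $c$ satisfying the criterion is uniquely determined; thus membership in $\Omega$ is the unambiguous algebraic condition that the functional $\zeta \mapsto \langle[\xi+\alpha,\zeta]_\gm,\xi\rangle$ be a scalar multiple of $\zeta \mapsto \langle\xi,\zeta\rangle$. To confirm that $c = 0$ whenever $\langle\xi,\xi\rangle \neq 0$, as required in (\ref{def-poly}), I would set $\zeta = \xi$: since $\Ad(H)$ preserves $\ip$, each $\ad(\alpha)$ with $\alpha \in \gh$ is skew--symmetric with respect to $\ip$ on $\gm$, so $\langle[\xi+\alpha,\xi]_\gm,\xi\rangle = \langle[\alpha,\xi],\xi\rangle = 0$ and hence $c\langle\xi,\xi\rangle = 0$. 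This is exactly the content of the closing sentence of Proposition \ref{char-go}.

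For the second equivalence I would note that the crown $\crown\{\{(G/H,ds^2)\}\} = (G_\C/H_\C,ds_\C^2)$ is again a reductive homogeneous pseudo--riemannian manifold, with reductive decomposition $\gg_\C = \gh_\C + \gm_\C$ and with $\ip$ extended by complex bilinearity. The same pointwise criterion therefore applies to it verbatim, with $\gm_\C$, $\gh_\C$ in place of $\gm$, $\gh$: the vector $\xi + \alpha \in \gm_\C + \gh_\C$ is a geodesic vector for the crown if and only if $\langle[\xi+\alpha,\zeta]_{\gm_\C},\xi\rangle = c\langle\xi,\zeta\rangle$ for all $\zeta \in \gm_\C$, which by (\ref{def-poly}) and (\ref{def-var}) is precisely $(\xi,\alpha) \in \Omega_\C$.

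The step I expect to be the main obstacle is the conceptual passage from the global form of the Geodesic Lemma to the pointwise geodesic--vector characterization that the present lemma really encodes; once that pointwise statement is in hand, both equivalences reduce to transcribing the defining equations of $\Omega$ and $\Omega_\C$. The only genuine bookkeeping is the handling of $c$ in the null case, where $c$ may be nonzero and the associated homogeneous geodesic is null, in agreement with the final assertion of Proposition \ref{char-go}.
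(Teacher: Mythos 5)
Your proposal is correct and takes essentially the same approach as the paper: the paper offers no argument beyond the phrase ``From the definitions,'' and your proof is exactly the unpacking of those definitions --- the pointwise geodesic-vector criterion underlying Proposition \ref{char-go}, transcribed against (\ref{def-poly}) and (\ref{def-var}) for both $(G/H,ds^2)$ and its crown. Your extra care with the constant $c$ (skew-symmetry of $\ad(\alpha)$ forcing $c=0$ unless $\xi$ is null) is a faithful elaboration of what the paper treats as immediate, not a departure from it.
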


Thus we have a minor reformulation of the Geodesic Lemma 
(Proposition \ref{char-go}), as follows.

\begin{proposition}\label{geo-space}
In the notation of {\rm (\ref{def-var})},
$(G/H,ds^2)$ is a geodesic orbit space if and only if, for every 
$\xi \in \gm$ there is an $\alpha \in \gh$ with $(\xi,\alpha) \in \Omega$,
and  the crown $\crown\{\{(G/H,ds^2)\}\}$ is a geodesic orbit space if 
and only if, for every $\xi \in \gm_\C$ there is an $\alpha \in \gh_\C$
with $(\xi,\alpha) \in \Omega_\C$.
\end{proposition}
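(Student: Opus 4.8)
The plan is to read the statement off directly from the Geodesic Lemma (Proposition \ref{char-go}) together with the identification of geodesic vectors with points of $\Omega$ and $\Omega_\C$ given by Lemma \ref{geo-vector}; no computation beyond these two inputs is needed, and the result is, as advertised, a reformulation. First I would treat $(G/H,ds^2)$ itself. Since $G$ acts transitively by isometries, every geodesic is a $G$--translate of one through $o = 1H$, so $(G/H,ds^2)$ is a GO space if and only if every geodesic through $o$ is homogeneous. Geodesics through $o$ are indexed by their initial velocity $\xi \in \gm$, and a homogeneous geodesic $t \mapsto \exp(t\eta)o$ has initial velocity equal to the $\gm$--component of $\eta$; hence the geodesic in direction $\xi$ is homogeneous exactly when some $\eta = \xi + \alpha$ with $\alpha \in \gh$ is a geodesic vector. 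This is condition (\ref{basic}) for that $\xi$, rewritten through (\ref{def-poly}) and (\ref{def-var}), and by Lemma \ref{geo-vector} it says precisely that $(\xi,\alpha) \in \Omega$. Chaining the equivalences gives the first assertion.

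For the crown I would run the identical argument over the complexification. The crown $\crown\{\{(G/H,ds^2)\}\} = (G_\C/H_\C,ds_\C^2)$ is again a homogeneous pseudo--riemannian manifold, of signature $(n,n)$, and $\gg_\C = \gh_\C + \gm_\C$ with $[\gh_\C,\gm_\C] \subset \gm_\C$ is a reductive decomposition for it, so the Geodesic Lemma applies verbatim with $\gm,\gh$ replaced by $\gm_\C,\gh_\C$. Repeating the previous paragraph and invoking the second half of Lemma \ref{geo-vector} yields that the crown is a GO space if and only if for every $\xi \in \gm_\C$ there is an $\alpha \in \gh_\C$ with $(\xi,\alpha) \in \Omega_\C$.

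The one step deserving care --- and where I expect a careful reader to want detail --- is the matching, in the crown case, between the \emph{real} bilinear form that defines the metric of $(G_\C/H_\C,ds_\C^2)$ and the \emph{complex}--bilinear form entering $\varphi_\C$. On the real tangent space $\gm_\C$ the metric is $\Re\langle\cdot,\cdot\rangle$, so the Geodesic Lemma a priori gives only the vanishing of $\Re\,\varphi_\C(\xi,\alpha,\zeta)$ for all $\zeta \in \gm_\C$. Because $\varphi_\C(\xi,\alpha,\zeta)$ is $\C$--linear in $\zeta$, replacing $\zeta$ by $i\zeta$ turns its real part into its imaginary part, so the vanishing of $\Re\,\varphi_\C(\xi,\alpha,\cdot)$ on all of $\gm_\C$ forces $\varphi_\C(\xi,\alpha,\cdot) \equiv 0$ --- exactly the condition (\ref{def-var}) defining $(\xi,\alpha) \in \Omega_\C$. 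This linearity bridge is the substantive point, whether one records it here or inside Lemma \ref{geo-vector}; with it in hand, both equivalences are mere restatements of the Geodesic Lemma.
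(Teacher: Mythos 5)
Your proposal is correct and follows essentially the paper's own route: the paper obtains this proposition exactly as you do, by chaining the Geodesic Lemma (Proposition \ref{char-go}) with the identification of geodesic vectors as points of $\Omega$ and $\Omega_\C$ in Lemma \ref{geo-vector}, presenting it as a ``minor reformulation'' with no further argument. Your final paragraph --- passing from $\Re\varphi_\C(\xi,\alpha,\cdot) \equiv 0$ to $\varphi_\C(\xi,\alpha,\cdot) \equiv 0$ via $\C$--linearity in $\zeta$ --- is a correct treatment of a detail the paper leaves implicit in Lemma \ref{geo-vector}, since the crown's signature--$(n,n)$ metric is the real part of the complex bilinear extension.
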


Now we combine Propositions \ref{realform} and \ref{geo-space}:

\begin{theorem}\label{GO-theorem}
In the notation of {\rm (\ref{def-var})},
$(G/H,ds^2)$ is a geodesic orbit space if and only if 
the crown $\crown\{\{(G/H,ds^2)\}\}$
is a geodesic orbit space.  In particular, if $(G'/H',ds'^2) \in
\{\{(G/H,ds^2)\}\}$, then $(G/H,ds^2)$ is a geodesic orbit space if 
and only if $(G'/H',ds'^2)$ is a geodesic orbit space.
\end{theorem}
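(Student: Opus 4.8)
The plan is to leverage the two propositions that have just been established, so that almost all the analytic content is already packaged. I would prove the ``crown'' equivalence first, and then deduce the ``real form family'' statement as an immediate corollary.

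\emph{The crown equivalence.}
By Proposition \ref{geo-space}, the statement ``$(G/H,ds^2)$ is a geodesic orbit space'' is precisely the statement that the projection map $\pi\colon \Omega \to \gm$, $(\xi,\alpha)\mapsto \xi$, is surjective; likewise ``$\crown\{\{(G/H,ds^2)\}\}$ is a geodesic orbit space'' is the statement that the projection $\pi_\C\colon \Omega_\C \to \gm_\C$ is surjective. So the theorem reduces to: $\pi$ is surjective if and only if $\pi_\C$ is surjective. The backward direction is the easy one: if every $\xi\in\gm_\C$ admits an $\alpha\in\gh_\C$ with $(\xi,\alpha)\in\Omega_\C$, then in particular this holds for $\xi\in\gm$, and we must produce a \emph{real} $\alpha\in\gh$; here I would invoke Proposition \ref{realform}, since for real $\xi$ the defining equations of $\Omega$ cut out a real form of the fiber of $\Omega_\C$ over $\xi$, so a complex solution forces a real one (or, more directly, I would run the forward argument below in reverse). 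The forward direction is the substantive one.

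\emph{The forward direction.}
Assume $\pi\colon\Omega\to\gm$ is surjective, i.e.\ $(G/H,ds^2)$ is GO, and fix an arbitrary $\xi\in\gm_\C$; I must find $\alpha\in\gh_\C$ with $\varphi_\C(\xi,\alpha,\zeta)=0$ for all $\zeta\in\gm_\C$. The idea is to express solvability of this \emph{linear} system in $\alpha$ as the vanishing of a holomorphic function on $\gm_\C$, and then use that it already vanishes on the real form $\gm$. Concretely, fixing the basis $\{\zeta_1,\dots,\zeta_\ell\}$ of Proposition \ref{realform}, the condition that the linear-in-$\alpha$ system $\varphi_\C(\xi,\alpha,\zeta_j)=0$ ($1\le j\le\ell$) have a solution $\alpha\in\gh_\C$ is a rank/consistency condition, expressible by the simultaneous vanishing of certain minors of the coefficient matrix $A(\xi)$ augmented by the constant column $b(\xi)$; each such minor is a polynomial in $\xi$ (polynomial, not merely holomorphic, since the entries of $A(\xi)$ and $b(\xi)$ are polynomial in $\xi$), hence holomorphic on $\gm_\C$. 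By hypothesis every such consistency condition holds for all real $\xi\in\gm$, so each minor polynomial vanishes on $\gm$. Since $\gm$ is a real form of $\gm_\C$, the last sentence of Proposition \ref{realform} forces every one of these polynomials to vanish identically on $\gm_\C$, so the system is consistent for every complex $\xi$, producing the required $\alpha$. This proves $\pi_\C$ is surjective.

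\emph{Descent to the real form family.}
For the final assertion, let $(G'/H',ds'^2)\in\{\{(G/H,ds^2)\}\}$. By the very definition of the real form family, the two spaces share a crown: $\crown\{\{(G/H,ds^2)\}\}=\crown\{\{(G'/H',ds'^2)\}\}$. Applying the crown equivalence to each side gives that $(G/H,ds^2)$ is GO iff its crown is, and $(G'/H',ds'^2)$ is GO iff that same crown is; chaining these yields the desired equivalence.

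The main obstacle I anticipate is making the ``consistency condition is holomorphic in $\xi$'' step airtight. Two points need care. First, one must phrase solvability of the linear system uniformly in $\xi$: the honest formulation is via vanishing of all $(r{+}1)\times(r{+}1)$ minors of the augmented matrix $[A(\xi)\mid b(\xi)]$ that are not already forced by minors of $A(\xi)$, i.e.\ the Rouché--Capelli criterion, so that consistency becomes a genuine polynomial condition rather than one depending discontinuously on the rank of $A(\xi)$. Second, one must be sure $c$ does not spoil holomorphy: recall $\varphi_\C$ is built so that $c=0$ unless $\xi$ is null, so on the open dense set where $\langle\xi,\xi\rangle\ne 0$ the term $c\langle\xi,\zeta\rangle$ drops out and $\varphi_\C$ is polynomial in $(\xi,\alpha)$; the null locus, being the zero set of the holomorphic function $\xi\mapsto\langle\xi,\xi\rangle$, is handled by continuity/density, and in any case the surjectivity of $\pi_\C$ is a closed condition. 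Once these two points are dispatched, the analytic-continuation principle from Proposition \ref{realform} carries all the weight and the proof is short.
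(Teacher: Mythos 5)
Your overall architecture is exactly the paper's: reduce, via Proposition \ref{geo-space}, the GO property of $(G/H,ds^2)$ and of its crown to surjectivity of the projections $\pi\colon\Omega\to\gm$ and $\pi_\C\colon\Omega_\C\to\gm_\C$; transfer surjectivity between the two using Proposition \ref{realform}; then get the real form family statement from the fact that every member of the family has the same crown. The backward direction (take real parts of a complex solution over a real $\xi$) and the descent paragraph are fine, and the descent is done the same way in the paper. The difficulty is the forward direction, which is the one place you go beyond the paper's one-sentence appeal to Proposition \ref{realform} --- and the mechanism you supply there does not work.

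The gap is this: solvability of a linear system $A(\xi)\alpha=b(\xi)$ whose coefficients depend polynomially on $\xi$ is \emph{not} a Zariski-closed condition in $\xi$. By Rouch\'e--Capelli it is the condition $\rank [A(\xi)\mid b(\xi)]=\rank A(\xi)$, which is a union of locally closed rank strata: constructible, but in general neither open nor closed. It therefore cannot be rewritten as ``simultaneous vanishing of certain minors,'' and the analytic-continuation step (vanishing on $\gm$ forces vanishing on $\gm_\C$) has nothing to latch onto. Concretely, what your argument proves is the following: letting $r$ be the generic rank of $A(\xi)$, all $(r{+}1)\times(r{+}1)$ minors of $[A(\xi)\mid b(\xi)]$ vanish on $\gm$, hence identically on $\gm_\C$, hence the system is solvable at every complex $\xi$ where $\rank A(\xi)=r$. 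It says nothing at the complex points where the rank of $A(\xi)$ drops, and that is exactly where solvability can fail over $\C$ even though it holds at every real point. A one-variable model shows the method collapsing: the system $(\xi^2+1)\alpha=1$ is solvable for every $\xi\in\R$ but not at $\xi=i$, and there are no $2\times 2$ minors of the $1\times 2$ augmented matrix for your criterion to detect this. For the same reason your fallback remark that ``surjectivity of $\pi_\C$ is a closed condition'' is false: $\pi_\C(\Omega_\C)$, being the image of an algebraic map, is constructible but need not be closed, and this non-closedness is precisely the crux; the same objection defeats the ``continuity/density'' treatment of the null locus and of the constant $c$. To be fair, the paper itself does not confront this point --- it simply asserts that Proposition \ref{realform} ``ensures'' that $\pi$ is surjective if and only if $\pi_\C$ is --- so you have correctly isolated where the real mathematical content of the theorem lies; but any honest completion must handle the non-generic rank locus (for instance by exploiting specific structure of $\Omega_\C$ beyond Zariski density of its real points), and the minors-plus-continuation argument as you state it would fail there.
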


\begin{proof}
Proposition \ref{geo-space} says that $(G/H,ds^2)$ is a geodesic orbit space 
if and only if the projection $\pi: \Omega \to \gm$, by 
$\pi(\xi,\alpha) = \xi$, is surjective; and also $(G_\C/H_\C,ds_\C^2)$ is a
geodesic orbit space if and only if the projection 
$\pi_\C: \Omega_\C \to \gm_\C$, by $\pi_\C(\xi,\alpha) = \xi$, is surjective.
But Proposition \ref{realform} ensures that $\pi$ is surjective if and only 
if $\pi_\C$ is surjective.  That proves the first assertion.  
Since $\{\{(G'/H',ds'^2)\}\} = \{\{(G/H,ds^2)\}\}$ the corresponding
$\pi' : \Omega' \to \gm'$ is surjective if and only if $\pi_\C$ is 
surjective.  The second assertion follows.
\end{proof}

\section{Naturally Reductive Spaces}\label{sec4}
\setcounter{equation}{0}

A homogeneous space $(G/H,ds^2)$ with a reductive decomposition 
$\gg = \gh + \gm$, $\Ad(H)\gm = \gm$, is called {\bf naturally reductive}
if 
\begin{equation}\label{def-nat}
\text{if } \xi \in \gm \text{ then } t \mapsto \exp(t\xi)H
\text{ is a complete geodesic in } (G/H,ds^2).
\end{equation}
The Lie algebra formulation of (\ref{def-nat}) is
\begin{equation}\label{nat-basic}
\<[\xi,\eta]_\gm,\zeta\> + \<\eta, [\xi,\zeta]_\gm\> = 0 \text{ for all }
\xi, \eta, \zeta \in \gm.
\end{equation}
The case $\zeta = \xi$ is $\<[\xi,\eta]_\gm,\zeta\> + \<\eta, [\xi,\zeta]_\gm\>
= \<[\xi,\eta]_\gm,\xi\> + \<\eta, [\xi,\xi]_\gm\>
= \<[\xi,\eta]_\gm,\xi\>$, so naturally reductive spaces are geodesic orbit 
spaces.  Or one can see this by noting that (\ref{def-nat}) is the case
$\alpha = 0$ of (\ref{basic}).

As in (\ref{def-poly}) one has corresponding polynomials 
\begin{equation}\label{def-nat-poly}
\begin{aligned}
&\psi: \gm +\gm + \gm \to \R \text{ by }
        \psi(\xi,\eta,\zeta) = 
	\<[\xi,\eta]_\gm,\zeta\> + \<\eta, [\xi,\zeta]_\gm\> \text{ and } \\
&\psi_\C: \gm_\C + \gm_\C + \gm_\C \to \C \text{ by }
        \psi_\C(\xi,\eta,\zeta) = 
	\<[\xi,\eta]_{\gm_\C},\zeta\> + \<\eta, [\xi,\zeta]_{\gm_\C}\>.
\end{aligned}
\end{equation}
As in (\ref{def-var}) those polynomials define corresponding moduli spaces
\begin{equation}\label{def-nat-var}
\begin{aligned}
&\Psi = \{(\xi,\eta, \zeta)) \in (\gm + \gm + \gm) \mid
        \psi(\xi,\eta,\zeta) = 0\}
        \text{ and }\\
&\Psi_\C = \{(\xi,\eta, \zeta) \in (\gm_\C + \gm_\C + \gm_\C) \mid
        \psi_\C(\xi,\eta,\zeta) = 0\}.
\end{aligned}
\end{equation}
Then we have the analog of Proposition \ref{realform}:

\begin{proposition}\label{nat-realform}
The real affine variety $\Psi = \Psi_\C \cap (\gm + \gm + \gm)$, and
it is a real form of the complex affine variety $\Psi_\C$.  In other
words $\Psi_\C$ is the complexification of $\Psi$.
In particular, if $f$ is a holomorphic function on $\Psi_\C$ and
$f|_\Psi \equiv 0$, then $f \equiv 0$.
\end{proposition}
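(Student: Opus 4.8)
The plan is to mirror, and in fact slightly simplify, the proof of Proposition \ref{realform}. The key observation is that $\psi_\C$ is nothing more than the complexification of $\psi$: the bracket $[\cdot,\cdot]_{\gm_\C}$ is the $\gm_\C$--component of the complex--bilinearly extended Lie bracket, and $\ip$ extends from $\gm + \gm$ to $\gm_\C + \gm_\C$ by complex bilinearity, so for real arguments $\xi,\eta,\zeta \in \gm$ one has $\psi_\C(\xi,\eta,\zeta) = \psi(\xi,\eta,\zeta)$. In other words $\psi = \psi_\C|_{(\gm + \gm + \gm)}$. With this in hand, the set--theoretic identity $\Psi = \Psi_\C \cap (\gm + \gm + \gm)$ is immediate from the definitions in (\ref{def-nat-var}).

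The simplification relative to Proposition \ref{realform} is that $\psi$ carries no universal quantifier over $\zeta$ to be eliminated: unlike $\Omega$, which was cut out by the $\ell$ equations $\varphi(\xi,\alpha,\zeta_j) = 0$ indexed by a basis of $\gm$, the variety $\Psi$ is the zero set of the single real polynomial $\psi$ on the real vector space $\gm + \gm + \gm$, and $\Psi_\C$ is the zero set of its complexification $\psi_\C$ on $\gm_\C + \gm_\C + \gm_\C = (\gm + \gm + \gm)\otimes_\R \C$. Writing $\tau$ for the complex conjugation of $\gm_\C + \gm_\C + \gm_\C$ with fixed--point set $\gm + \gm + \gm$, the polynomial $\psi_\C$ has real coefficients, i.e. it satisfies $\overline{\psi_\C(v)} = \psi_\C(\tau v)$; hence $\Psi_\C$ is $\tau$--stable and $\Psi = (\Psi_\C)^\tau$. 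This exhibits $\Psi$ as a real form of $\Psi_\C$, equivalently $\Psi_\C$ as the complexification of $\Psi$.

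The only point requiring an external fact is the final assertion, that a holomorphic $f$ on $\Psi_\C$ with $f|_\Psi \equiv 0$ vanishes identically. This is the standard statement that the real points of a complex affine variety defined over $\R$ are Zariski dense in it (equivalently, on the nonsingular locus, the identity theorem for holomorphic functions), and it is invoked here exactly as it is used silently for $\Omega$ in Proposition \ref{realform}. I expect no genuine obstacle: the argument is structurally identical to the one already carried out for $\Omega_\C$, the only computation being the verification that $\psi_\C$ restricts to $\psi$ on real points, and that verification follows at once from the complex--bilinearity of the bracket and of $\ip$.
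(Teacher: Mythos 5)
Your proof is correct and is essentially the paper's own argument: the paper gives no separate proof of Proposition \ref{nat-realform}, presenting it simply as ``the analog of Proposition \ref{realform},'' whose proof consists precisely of the observation that the real defining polynomials are the restrictions of the complex ones to the real points. Your additional remark --- that here no basis-indexed family of equations is needed because $\zeta$ is a variable rather than a quantified parameter, so $\Psi$ is cut out by the single polynomial $\psi = \psi_\C|_{(\gm+\gm+\gm)}$ --- is exactly the simplification the analogy provides, and your appeal to density of real points for the final holomorphic-vanishing claim is invoked at the same level of detail as in the paper's proof of Proposition \ref{realform}.
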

We reformulate the definition (\ref{nat-basic}) of naturally reductive space:

\begin{proposition}\label{natred-space}
In the notation of {\rm (\ref{def-nat-var})},
$(G/H,ds^2)$ is a naturally reductive space if and only if
$\psi(\xi,\eta,\zeta) \in \Psi$ whenever  
$(\xi,\eta, \zeta)) \in (\gm + \gm + \gm)$.
The crown $\crown\{\{(G/H,ds^2)\}\}$ is a naturally reductive space if
and only if $\psi(\xi,\eta,\zeta) \in \Psi_\C$ whenever 
$(\xi,\eta, \zeta)) \in (\gm_\C + \gm_\C + \gm_\C)$.
\end{proposition}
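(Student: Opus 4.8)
The plan is to observe that this Proposition, like Proposition \ref{geo-space} in the geodesic--orbit case, is a direct transcription of the Lie-algebra definition of natural reductivity into the language of the varieties $\Psi$ and $\Psi_\C$, so that the only real work is unwinding (\ref{def-nat-var}) against (\ref{nat-basic}).

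First I would recall that, by (\ref{nat-basic}), $(G/H,ds^2)$ is naturally reductive exactly when $\psi(\xi,\eta,\zeta) = 0$ for every triple $(\xi,\eta,\zeta) \in \gm + \gm + \gm$. On the other hand, the definition (\ref{def-nat-var}) of $\Psi$ says that a given triple $(\xi,\eta,\zeta)$ lies in $\Psi$ precisely when $\psi(\xi,\eta,\zeta) = 0$. Putting these together, natural reductivity is equivalent to the assertion that every $(\xi,\eta,\zeta) \in \gm + \gm + \gm$ already lies in $\Psi$, i.e. that $\Psi = \gm + \gm + \gm$; this is the first equivalence. (Here I read the displayed condition as ``$(\xi,\eta,\zeta) \in \Psi$'', since $\psi$ itself is scalar--valued.)

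For the second equivalence I would note that the crown $\crown\{\{(G/H,ds^2)\}\} = (G_\C/H_\C,ds_\C^2)$ is itself a homogeneous pseudo--riemannian manifold equipped with the $\Ad(H_\C)$--stable reductive decomposition $\gg_\C = \gh_\C + \gm_\C$, and that $ds_\C^2$ restricts on $\gm_\C$ to the complex--bilinear extension of $\ip$. Consequently the Lie-algebra criterion (\ref{nat-basic}) applies verbatim to the crown after replacing $\gm$ by $\gm_\C$, the bracket projection by $[\,\cdot\,,\cdot\,]_{\gm_\C}$, and $\ip$ by its bilinear extension; that is, the crown is naturally reductive exactly when $\psi_\C(\xi,\eta,\zeta) = 0$ for all $(\xi,\eta,\zeta) \in \gm_\C + \gm_\C + \gm_\C$. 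By the definition of $\Psi_\C$ in (\ref{def-nat-var}) this is the same as saying that every such triple lies in $\Psi_\C$, which is the second equivalence.

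There is essentially no obstacle at the level of the argument, since both statements are tautologies once the definitions are aligned; the only point that deserves a sentence of justification is that natural reductivity of the crown is governed by the complexified form $\psi_\C$ of the defining identity, which rests on the fact---already built into the setup of Section \ref{sec1}---that $ds_\C^2$ is the complex--bilinear extension of $ds^2$ and that the complexified decomposition is again reductive. I would therefore present the proof as two short lines, exactly parallel to Proposition \ref{geo-space}, rather than as a computation.
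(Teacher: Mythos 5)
Your proof is correct and takes essentially the same approach as the paper: the paper offers no separate proof of this proposition, presenting it as a direct reformulation of the definition (\ref{nat-basic}) in terms of (\ref{def-nat-var}), which is precisely the definitional unwinding you carry out (including the observation that the crown inherits the reductive decomposition $\gg_\C = \gh_\C + \gm_\C$ with the bilinearly extended metric, so (\ref{nat-basic}) applies verbatim there). Your reading of the displayed condition as ``$(\xi,\eta,\zeta) \in \Psi$'' rather than ``$\psi(\xi,\eta,\zeta) \in \Psi$'' correctly repairs what is evidently a typo in the statement, since $\psi$ is scalar--valued while $\Psi$ is a subvariety of $\gm + \gm + \gm$.
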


Now we combine Propositions \ref{nat-realform} and \ref{natred-space}:

\begin{theorem}\label{natred-theorem}
In the notation of {\rm (\ref{def-nat-var})},
$(G/H,ds^2)$ is a naturally reductive space if and only if
the crown $\crown\{\{(G/H,ds^2)\}\}$
is a naturally reductive space.  In particular, if $(G'/H',ds'^2) \in
\{\{(G/H,ds^2)\}\}$, then $(G/H,ds^2)$ is a naturally reductive space if
and only if $(G'/H',ds'^2)$ is a naturally reductive space.
\end{theorem}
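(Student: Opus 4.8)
The plan is to follow the pattern of the proof of Theorem \ref{GO-theorem}, using Proposition \ref{nat-realform} in place of Proposition \ref{realform} and Proposition \ref{natred-space} in place of Proposition \ref{geo-space}. First I would restate both hypotheses as pure vanishing statements. By Proposition \ref{natred-space}, $(G/H,ds^2)$ is naturally reductive exactly when $\Psi = \gm + \gm + \gm$, i.e.\ when $\psi \equiv 0$ on all of $\gm + \gm + \gm$; and the crown $\crown\{\{(G/H,ds^2)\}\}$ is naturally reductive exactly when $\Psi_\C = \gm_\C + \gm_\C + \gm_\C$, i.e.\ when $\psi_\C \equiv 0$ on all of $\gm_\C + \gm_\C + \gm_\C$. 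So the whole theorem reduces to the equivalence $\psi \equiv 0 \iff \psi_\C \equiv 0$.

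I would then prove that equivalence. The direction $(\Leftarrow)$ is immediate, since $\psi = \psi_\C|_{\gm + \gm + \gm}$ by (\ref{def-nat-poly}). For $(\Rightarrow)$ I would invoke the real-form principle underlying Proposition \ref{nat-realform}: the coordinate functions of the polynomial map $\psi_\C$ are holomorphic on $\gm_\C + \gm_\C + \gm_\C$ and restrict on the real form $\gm + \gm + \gm$ to the coordinates of $\psi$, so if they vanish on the real form they vanish identically. Equivalently, since $\Psi_\C$ is the complexification of $\Psi$ by Proposition \ref{nat-realform}, the equality $\Psi = \gm + \gm + \gm$ forces $\Psi_\C = \gm_\C + \gm_\C + \gm_\C$. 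Either formulation yields that $(G/H,ds^2)$ is naturally reductive if and only if its crown is, which is the first assertion.

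For the second assertion I would argue exactly as in Theorem \ref{GO-theorem}. If $(G'/H',ds'^2) \in \{\{(G/H,ds^2)\}\}$, then the two spaces share the same crown, $\crown\{\{(G'/H',ds'^2)\}\} = \crown\{\{(G/H,ds^2)\}\}$. By the first assertion, each of $(G/H,ds^2)$ and $(G'/H',ds'^2)$ is naturally reductive if and only if that common crown is, so the two conditions are equivalent to each other.

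The only step requiring genuine care is the passage from real to complex vanishing, and this is precisely where Proposition \ref{nat-realform} (Zariski density of the real points of $\Psi_\C$, equivalently analytic continuation of $\psi_\C$) does the work. I would emphasize that, unlike the geodesic orbit case of Theorem \ref{GO-theorem}, no surjectivity-of-projection argument is needed here: because natural reductivity is the vanishing of a single polynomial map rather than a \emph{for every $\xi$ there exists $\alpha$} condition, the moduli varieties $\Psi$ and $\Psi_\C$ enter only through the elementary observation that a subvariety coincides with its ambient space exactly when its complexification does. I therefore expect no real obstacle beyond correctly citing the real-form machinery already established.
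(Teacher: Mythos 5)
Your proposal is correct and follows the same route the paper intends: Theorem \ref{natred-theorem} is stated there with no written proof beyond the instruction to combine Propositions \ref{nat-realform} and \ref{natred-space}, exactly as you do, and your reduction to the equivalence $\psi \equiv 0 \iff \psi_\C \equiv 0$ via the real-form (Zariski density) principle is precisely the intended argument. Your closing remark is also apt: since natural reductivity is a universal vanishing condition rather than a \emph{for every $\xi$ there exists $\alpha$} condition, the surjectivity-of-projection step from Theorem \ref{GO-theorem} is indeed unnecessary here.
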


\section{Commutative Spaces}\label{sec5}
\setcounter{equation}{0}

Consider a pseudo-riemannian manifold $(G/H,ds^2)$ where $G$ is the 
identity component of the group of all isometries.  The $G$--invariant
differential operators on $G/H$ form an associative algebra $\cD(G,H)$.
We say that $(G/H,ds^2)$ is {\bf commutative} if the algebra $\cD(G,H)$ 
is commutative.  This is the usual definition when $H$ is compact and 
$(G/H,ds^2)$ is riemannian, but it makes perfectly
good sense (and is appropriate for us) in any signature.

We will discuss D'Atri spaces in Section \ref{sec7}, but the point here is that
commutative spaces are D'Atri spaces \cite{KV1983}.  In dimensions $\leqq 5$ 
one can say a bit more.  There,
a homogeneous riemannian manifold is commutative if and only if it is
naturally reductive. 

Without loss of generality we suppose that $G/H$ is connected and simply 
connected, and that $G = I^0(G/H,ds^2)$.  Then also $H$ is connected.
As before we start with a reductive decomposition $\gg = \gh + \gm$.
Identify $\gm$ with the tangent space $T_{x_0}(G/H)$ at the base point
$x_0 = 1H \in G/H$.  That gives 
an obvious $\Ad(H)$--equivariant bijection between $\cD(G,H)$
and the $\Ad(H)$--invariants \footnote{by an abuse of notation
we write $\Ad(H)$ instead of $S(\Ad(H))$ for 
the symmetric powers that form the action of $H$ on $S(\gm)$.}
$S(\gm)^H$ in the symmetric algebra $S(\gm)$.
See Helgason, \cite[Ch. II, Theorem 4.6]{H1984}, for the details.

Given any real basis $\{\zeta_1, \dots , \zeta_\ell\}$ of $\gh$,  
$S(\gm)^H$ is the intersection 
$\bigcap_{1 \leqq j \leqq \ell} S(\gm)^{\zeta_j}$ of null spaces of the 
$\ad(\zeta_j)|_{S(\gm)}$.  As $\{\zeta_j\}$ is a complex basis of
$\gm_\C$ we have

\begin{lemma}\label{comm-basis}
The algebra $\cD(G_\C,H_\C)$ is the complexification $\cD(G,H)_\C$ of
the algebra.  $\cD(G,H)$.  
\end{lemma}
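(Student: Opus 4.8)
The plan is to transport the statement through Helgason's identification and then reduce it to an elementary fact about complexification of kernels. Write $\lambda: S(\gm)^H \to \cD(G,H)$ for the symmetrization linear isomorphism of \cite[Ch.~II, Theorem 4.6]{H1984}, through which the composition product of $\cD(G,H)$ is carried back to a (generally noncommutative) product on $S(\gm)^H$, and likewise $\lambda_\C: S(\gm_\C)^{H_\C} \to \cD(G_\C,H_\C)$ for its complex analogue. There is a natural algebra homomorphism $\Phi: \cD(G,H)_\C \to \cD(G_\C,H_\C)$ sending a $G$-invariant operator to its holomorphic continuation and extended $\C$-linearly; under $\lambda$ and $\lambda_\C$ this $\Phi$ corresponds to the restriction to $S(\gm)^H \otimes_\R \C$ of the canonical identification $S(\gm) \otimes_\R \C = S(\gm_\C)$. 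So it suffices to prove the purely algebraic identity $S(\gm)^H \otimes_\R \C = S(\gm_\C)^{H_\C}$ as subspaces of $S(\gm_\C)$, and then to check that $\Phi$ respects the products.

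For the algebraic identity I would first invoke connectedness. The excerpt already records that $H$ is connected; since the complexification of a connected Lie group is connected, $H_\C$ is connected as well. Hence $\Ad(H)$-invariance in $S(\gm)$ is equivalent to annihilation by $\ad(\gh)$, and $\Ad(H_\C)$-invariance in $S(\gm_\C)$ is equivalent to annihilation by $\ad(\gh_\C)$. Choosing the real basis $\{\zeta_1,\dots,\zeta_\ell\}$ of $\gh$ — which is simultaneously a complex basis of $\gh_\C$ (this is the intended reading of the phrase preceding the Lemma) — yields
\[
S(\gm)^H = \bigcap_{j} \Ker\bigl(\ad(\zeta_j)|_{S(\gm)}\bigr),
\qquad
S(\gm_\C)^{H_\C} = \bigcap_{j} \Ker\bigl(\ad(\zeta_j)|_{S(\gm_\C)}\bigr).
\]

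Now each $\ad(\zeta_j)$ is a real operator on $S(\gm)$ whose $\C$-linear extension to $S(\gm_\C) = S(\gm) \otimes_\R \C$ is precisely $\ad(\zeta_j)|_{S(\gm_\C)}$. Because each $\ad(\zeta_j)$ acts as a degree-preserving derivation, I would argue degree by degree on the finite-dimensional graded pieces $S^k(\gm)$, where one has the standard facts that the kernel of the complexification of a real operator is the complexification of its kernel, and that finite intersections of real subspaces commute with complexification. Assembling over all $k$ and all $j$ gives $S(\gm_\C)^{H_\C} = \bigl(\bigcap_j \Ker \ad(\zeta_j)|_{S(\gm)}\bigr) \otimes_\R \C = S(\gm)^H \otimes_\R \C$, the desired vector-space identity, and hence that $\Phi$ is a linear bijection.

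The step I expect to be the main obstacle is confirming that $\Phi$ is an isomorphism of \emph{algebras} rather than merely of vector spaces, since the product on $\cD(G,H)$ is composition of operators. I would establish multiplicativity of $\Phi$ by noting that composition is a local, polynomial-in-the-derivatives operation: the holomorphic continuation of a composition of $G$-invariant differential operators agrees with the composition of their continuations, and the resulting operator is $G_\C$-invariant by uniqueness of analytic continuation. Combined with the linear bijection produced above, this shows $\Phi$ is an algebra isomorphism, so $\cD(G_\C,H_\C) = \cD(G,H)_\C$.
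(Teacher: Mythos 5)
Your proposal is correct and takes essentially the same route as the paper: Helgason's identification of $\cD(G,H)$ with $S(\gm)^H$, followed by the observation that a real basis $\{\zeta_1,\dots,\zeta_\ell\}$ of $\gh$ is simultaneously a complex basis of $\gh_\C$, so that $S(\gm_\C)^{H_\C} = \bigcap_j \Ker\bigl(\ad(\zeta_j)|_{S(\gm_\C)}\bigr)$ is the complexification of $S(\gm)^H = \bigcap_j \Ker\bigl(\ad(\zeta_j)|_{S(\gm)}\bigr)$. You merely make explicit what the paper leaves implicit (connectedness of $H_\C$, the degree-by-degree kernel argument, and multiplicativity of the continuation map), and you correctly read the paper's ``complex basis of $\gm_\C$'' as the intended $\gh_\C$.
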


Now $\cD(G_\C,H_\C)$ is commutative if and only if $\cD(G,H)$ is commutative.
We apply this to real form families.

\begin{theorem}\label{comm-family}
The pseudo--riemannian manifold $(G/H,ds^2)$ is commutative if and only
if its complexification $(G_\C/H_\C, ds_\C^2)$ is commutative.  If
$(G'/H',ds'^2) \in \{\{(G/H,ds^2)\}\}$, then $(G'/H',ds'^2)$ is
commutative if and only if $(G/H,ds^2)$ is commutative.
\end{theorem}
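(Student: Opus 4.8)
The plan is to mirror exactly the pattern already established for the GO and naturally reductive cases, so that commutativity—an algebraic property of $\cD(G,H)$—transfers across the crown and then throughout the real form family. The main structural input is Lemma \ref{comm-basis}, which identifies $\cD(G_\C,H_\C)$ with the complexification $\cD(G,H)_\C$. First I would settle the equivalence between $(G/H,ds^2)$ and its crown: since $\cD(G_\C,H_\C) = \cD(G,H)_\C$, the multiplication on the complexified algebra is the $\C$-bilinear extension of the multiplication on $\cD(G,H)$. Commutativity is the vanishing of the commutator bracket $[D_1,D_2] = D_1D_2 - D_2D_1$; because this bracket extends $\C$-bilinearly, it vanishes identically on $\cD(G,H)_\C$ if and only if it vanishes on the real form $\cD(G,H)$. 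This gives the first sentence of the theorem.

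For the second sentence I would invoke the equivalence-relation structure of real form families established in the introduction. If $(G'/H',ds'^2) \in \{\{(G/H,ds^2)\}\}$, then by definition the two spaces share a common crown, $\crown\{\{(G'/H',ds'^2)\}\} = \crown\{\{(G/H,ds^2)\}\} = (G_\C/H_\C,ds_\C^2)$. Applying the first sentence to each space separately, $(G/H,ds^2)$ is commutative if and only if its crown is, and $(G'/H',ds'^2)$ is commutative if and only if that same crown is. Chaining these two biconditionals through the shared crown yields the desired equivalence between commutativity of $(G/H,ds^2)$ and of $(G'/H',ds'^2)$.

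The step I expect to be the genuine obstacle is the hypothesis management surrounding the definition of commutativity, rather than the bilinearity bookkeeping. The definition assumes $G = I^0(G/H,ds^2)$ is the full isometry identity component, with $G/H$ connected and simply connected and $H$ connected; these normalizations were used precisely so that $\cD(G,H) \cong S(\gm)^H$ via Helgason's theorem. The delicate point is that passing to the crown, or to another member $(G'/H')$ of the family, must preserve the identification of $G$ (respectively $G_\C$, $G'$) as the relevant isometry group, so that $\cD$ really is the algebra of invariant differential operators and not merely invariants under a possibly smaller transitive group. I would address this by noting that the reductive decomposition and the $\Ad(H)$-action on $S(\gm)$ are intrinsic to the infinitesimal data $(\gg,\gh,\gm,\ip)$, and that Lemma \ref{comm-basis} is stated at exactly this Lie-algebraic level—$S(\gm)^H = \bigcap_j S(\gm)^{\zeta_j}$ with the $\zeta_j$ serving simultaneously as a real basis of $\gh$ and a complex basis of $\gh_\C$—so the identification survives complexification and real-form passage without appeal to the global isometry group. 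Once this is observed, the proof reduces to the two-line bilinearity argument above, and I would write it accordingly.
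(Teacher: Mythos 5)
Your proposal is correct and takes essentially the same route as the paper: the whole argument rests on Lemma \ref{comm-basis} (that $\cD(G_\C,H_\C) = \cD(G,H)_\C$), the elementary fact that a real algebra is commutative if and only if its complexification is, and then chaining the resulting biconditional through the common crown shared by any two members of $\{\{(G/H,ds^2)\}\}$. Your added care about the normalization $G = I^0(G/H,ds^2)$ and the observation that the identification $\cD(G,H) \cong S(\gm)^H$ lives at the level of the data $(\gg,\gh,\gm,\ip)$ is exactly how the paper implicitly justifies passing between real forms.
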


\section{Weakly Symmetric Spaces}\label{sec6}
\setcounter{equation}{0}

Recall that a pseudo--riemannian manifold $(M,ds^2)$ is {\bf weakly symmetric}
if, given $x \in M$ and a tangent vector $\xi \in T_x(M)$, there is an
isometry $s_{x,\xi} \in I(M,ds^2)$ such that $s_{x,\xi}(x) = x$ and
$ds_{x,\xi}(\xi) = -\xi$.  The familiar special case: $(M,ds^2)$ is symmetric 
if, given $x \in M$
there is an isometry $s_x \in I(M,ds^2)$ such that $s_x(x) = x$ and
$ds_x(\xi) = -\xi$ for every $\xi \in T_x(M)$.  

Riemannian weakly symmetric spaces were introduced by Selberg \cite{S1956}
in the context of harmonic analysis and algebraic geometry.  One of
his results was that riemannian weakly symmetric spaces are commutative.
In view of Theorem \ref{comm-family},

\begin{corollary}\label{gen-selberg}
Let $(G/H,ds^2)$ be a riemannian weakly symmetric space.  Then
$\crown\{\{(G/H,ds^2)\}\}$ is commutative, and every
$(G'/H',ds'^2) \in \{\{(G/H,ds^2)\}\}$ is commutative.
\end{corollary}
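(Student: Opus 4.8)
The plan is to combine Selberg's classical theorem with Theorem~\ref{comm-family}, so that the corollary reduces to an entirely formal argument. First I would invoke Selberg's result, cited in the text immediately above the statement: every riemannian weakly symmetric space is commutative. Applied to the given $(G/H,ds^2)$, which by hypothesis is riemannian and weakly symmetric, this yields that $(G/H,ds^2)$ is commutative. This is the only place where the \emph{weakly symmetric} hypothesis enters; after this step the weakly symmetric property plays no further role, and I work purely with commutativity.

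Next I would transport commutativity to the crown via the first assertion of Theorem~\ref{comm-family}. That theorem states $(G/H,ds^2)$ is commutative if and only if its complexification $(G_\C/H_\C,ds_\C^2)$ is commutative. Since $\crown\{\{(G/H,ds^2)\}\} = (G_\C/H_\C,ds_\C^2)$ by definition, the commutativity of $(G/H,ds^2)$ immediately gives that $\crown\{\{(G/H,ds^2)\}\}$ is commutative, establishing the first conclusion. For the second conclusion, I would take an arbitrary $(G'/H',ds'^2) \in \{\{(G/H,ds^2)\}\}$ and apply the second assertion of Theorem~\ref{comm-family}, which says that membership in the same real form family forces $(G'/H',ds'^2)$ to be commutative precisely when $(G/H,ds^2)$ is. As we have already shown the latter, commutativity of $(G'/H',ds'^2)$ follows.

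I do not expect any genuine obstacle here: the corollary is designed as a direct consequence of the preceding theorem, and the proof is a two-line chaining of Selberg's theorem with Theorem~\ref{comm-family}. The one subtle point worth flagging is that the members $(G'/H',ds'^2)$ of the real form family need \emph{not} themselves be weakly symmetric or even riemannian---this is exactly the value of the result, since it propagates commutativity across signatures where Selberg's original argument does not directly apply. The argument succeeds because commutativity, unlike weak symmetry, has been shown in Section~\ref{sec5} to be a real-form-family invariant, so the weakly symmetric hypothesis is used only once, at the riemannian representative, and never needs to hold elsewhere in the family.
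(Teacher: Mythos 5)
Your proposal is correct and matches the paper's own (implicit) argument exactly: the corollary is presented as an immediate consequence of Selberg's theorem that riemannian weakly symmetric spaces are commutative, combined with Theorem~\ref{comm-family} to transfer commutativity to the crown and to every member of the real form family. Your flagged subtlety---that the other members of the family need not be riemannian or weakly symmetric, which is precisely why commutativity rather than weak symmetry is the property being propagated---is also the point the paper itself emphasizes.
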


Weakly symmetric pseudo--riemannian manifolds are geodesic orbit spaces
\cite[Theorem 4.2]{CW2012}.  Thus, if $(G/H,ds^2)$is weakly symmetric then, by 
Theorem \ref{GO-theorem}, every $(G'/H',ds'^2) \in \{\{(G/H,ds^2)\}\}$ is
a geodesic orbit space.  

There are $\aleph_0$ examples in the tables of \cite{CW2017} and
\cite{WC2018}.  Tables 3.6, 4.12, 5.1, 5.2 and 5.3 in \cite{CW2017}
list various classes of real form families $\{\{(G/H,ds^2)\}\}$
with $(G/H,ds^2)$ weakly symmetric, $G$ semisimple and $H$ reductive.
The Tables in \cite{WC2018} list various classes of real
form families $\{\{(G/H,ds^2)\}\}$ for which $(G/H,ds^2)$ is a weakly
symmetric nilmanifold with $G = N \rtimes H$.

The question here is just when weak symmetry of $(G/H,ds^2)$ implies weak
symmetry for the members of its real form family $\{\{(G/H,ds^2)\}\}$. 
A partial answer is implicit in a result of Akhiezer and Vinberg 
\cite[Theorem 12.6.10]{W2007}; see \cite[Corollary 12.6.12]{W2007}:

\begin{proposition}\label{wsym-red}
Let $(G/H,ds^2)$ be a weakly symmetric pseudo--riemannian manifold with
$G$ connected and reductive, and $H$ reductive in $G$.  Then every
$(G'/H',ds'^2) \in \{\{(G/H,ds^2)\}\}$ is weakly symmetric.
\end{proposition}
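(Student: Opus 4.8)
The plan is to exploit the Akhiezer--Vinberg criterion in exactly the way the proof of Theorem \ref{comm-family} exploited Lemma \ref{comm-basis}: reduce weak symmetry of the real space to a property of the crown, and then invoke the fact that the crown is common to every member of the real form family. The essential input, \cite[Corollary 12.6.12]{W2007} (resting on \cite[Theorem 12.6.10]{W2007}), asserts that, under the present hypotheses -- $G$ connected and reductive, $H$ reductive in $G$ -- the space $(G/H,ds^2)$ is weakly symmetric if and only if its complexification $(G_\C/H_\C,ds_\C^2) = \crown\{\{(G/H,ds^2)\}\}$ is weakly symmetric, equivalently if and only if $(G_\C,H_\C)$ carries a suitable involutive (anti)holomorphic structure. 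The whole argument rests on the observation that this criterion is a statement about the crown alone, with no reference to the chosen real form.

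First I would check that the reductivity hypotheses propagate through the real form family. Since $G$ is reductive, its complexification $G_\C$ is a connected reductive complex group, and $H$ reductive in $G$ forces $H_\C$ to be reductive in $G_\C$. Conversely, any member $(G'/H',ds'^2) \in \{\{(G/H,ds^2)\}\}$ has $G'$ a real form of $G_\C$ and $H'$ a real form of $H_\C$; real forms of reductive groups are reductive, so $G'$ is reductive and $H'$ is reductive in $G'$. Hence \cite[Corollary 12.6.12]{W2007} is applicable to every member of the family, not merely to $(G/H,ds^2)$.

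Next I would chain the equivalences. Applying \cite[Corollary 12.6.12]{W2007} to $(G/H,ds^2)$ gives that it is weakly symmetric if and only if the crown $(G_\C/H_\C,ds_\C^2)$ is weakly symmetric. Because all members of a real form family share one and the same crown, $\crown\{\{(G/H,ds^2)\}\} = \crown\{\{(G'/H',ds'^2)\}\}$, the same corollary applied to $(G'/H',ds'^2)$ gives that it is weakly symmetric if and only if that same crown is weakly symmetric. Concatenating the two equivalences yields that $(G/H,ds^2)$ is weakly symmetric if and only if $(G'/H',ds'^2)$ is, which is the assertion.

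The main obstacle is entirely contained in the input \cite[Theorem 12.6.10]{W2007}: the passage between the real space and its complexification. This is the step that genuinely requires reductivity, since the Akhiezer--Vinberg characterization of weak symmetry by an involutive structure on $(G_\C,H_\C)$ is available only in the reductive category; this is precisely why the result is a partial answer and does not reach, for instance, the weakly symmetric nilmanifolds of \cite{WC2018}. A secondary point to verify is metric compatibility: one must confirm that the weakly symmetric isometries $s_{x,\xi}$ on the real space correspond, under complexification and restriction, to genuine isometries for $(G_\C/H_\C,ds_\C^2)$, so that ``weakly symmetric'' carries the same meaning on both sides of the criterion. This compatibility is built into the formulation of \cite[Corollary 12.6.12]{W2007} together with the definition of the crown, so no further computation should be needed.
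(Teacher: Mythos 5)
Your proposal is correct and takes essentially the same route as the paper, whose entire ``proof'' of Proposition \ref{wsym-red} is the citation of \cite[Theorem 12.6.10 and Corollary 12.6.12]{W2007}: since the Akhiezer--Vinberg weak-symmetry criterion under the reductivity hypotheses is a statement about the crown $(G_\C/H_\C, ds_\C^2)$ alone, and the crown is shared by every member of $\{\{(G/H,ds^2)\}\}$, the equivalences chain exactly as you describe. Your additional checks (that reductivity propagates to all real forms in the family, and that the geometric and group-theoretic notions of weak symmetry match up) fill in precisely what the paper leaves implicit in the phrase ``implicit in a result of Akhiezer and Vinberg.''
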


See \cite[Section 15.4]{W2007} for a discussion of commutativity for
weakly symmetric riemannian nilmanifolds.

\section{D'Atri Spaces}\label{sec7}
\setcounter{equation}{0}

We say that
a pseudo--riemannian manifold $(M,ds^2)$ is a {\bf D'Atri space}
if its local geodesic symmetries $\sigma_x: \exp(t\xi) \mapsto \exp(-t\xi)$,
$\xi \in T_x(M)$ and $t$ reasonably small, are volume preserving.  This is
the standard definition when $(M,ds^2)$ is riemannian, but it makes perfectly
good sense (and is appropriate for us) in any signature.

A geodesic orbit riemannian manifold is a D'Atri space 
\cite[Theorem 1]{KV1984}.  That argument of Kowalski and Vanhecke goes
through {\em mutatis mutandis} for pseudo--riemannian manifolds, using 
the definition introduced just above.   Or see \cite{KV1985} to
develop this in the more general setting of two-point functions. 
In any case, we now have inclusions of real form families of 
pseudo--riemannian manifolds:
\begin{equation}\label{inclusions}
\begin{aligned}
&\text{(weakly symmetric spaces) } \subset
	 \text{ (geodesic orbit spaces) } \subset 
	 \text{ (D'Atri spaces)} \\
&\text{(naturally reductive spaces) } \subset
	 \text{ (geodesic orbit spaces) } \subset 
         \text{ (D'Atri spaces)}
\end{aligned}
\end{equation}
This suggests a number of open problems, one of which was noted toward 
the end of Section \ref{sec6}:
\begin{itemize}
\item If $(G/H, ds^2)$ is weakly symmetric and $(G'/H',ds'^2) \in
	\{\{(G/H, ds^2)\}\}$, is $(G'/H/.ds'^2)$ weakly symmetric?
\item Can the naturally reductive weakly symmetric spaces be
characterized as the weakly symmetric spaces $(G/H,ds^2)$ for which 
every $(G'/H',ds'^2) \in \{\{(G/H, ds^2)\}\}$ is weakly symmetric?
\item If $(G/H, ds^2)$ is a D'Atri space and $(G'/H',ds'^2) \in
        \{\{(G/H, ds^2)\}\}$, is $(G'/H',ds'^2)$ a D'Atri space?
\item Can the geodesic orbit spaces be
characterized as the D'Atri spaces $(G/H,ds^2)$ for which 
every $(G'/H',ds'^2) \in \{\{(G/H, ds^2)\}\}$ is a D'Atri space?
\item Which commutative spaces are weakly symmetric spaces?
\item What happens if we restrict these questions to the case of
spaces $(G/H,ds^2)$ for which $G$ is semisimple (or real reductive) 
and $H$ is reductive in $G$?
\item What happens if we restrict these questions to the case of
spaces $(G/H,ds^2)$ for which $G$ is of the form $N \rtimes H$
with $N$ nilpotent?
\end{itemize}


\begin{thebibliography}{99}

\bibitem{CW2012}
Z. Chen \& J. A. Wolf, 
Pseudo--riemannian weakly symmetric manifolds, Ann. Global Anal. Geom. 
{\bf 41} (2012), 381--390.

\bibitem{CW2017}
Z. Chen \& J. A. Wolf,
Semisimple weakly symmetric pseudo-riemannian manifolds, 
Abh. dem Math. Seminar Hamburg {\bf 88} (2018) 331--369. 
The tables are easier to read in the arXiv version
\{arXiv: 1707.01181\}.

\bibitem{DK2007}
Z Dusek \& O. Kowalski, 
Light--like homogeneous geodesics and the geodesic lemma for any signature, 
Publ.  Math. Debrecen {\bf 71} (2007), 245--252.

\bibitem{FMP2005}
J. Figueroa-O’Farrill, P. Meessen \& S. Philip, 
Homogeneity and plane-wave limits, J. High Energy Physics
{\bf 05} (2005),  050ff.

\bibitem{H1984} S. Helgason,
``Groups and Geometric Analysis'', Academic Press, 1984.

\bibitem{KV1983} O. Kowalski \& L. Vanhecke,
Op\' erateurs diff\' erentiels invariants et sym\' etries g\' eod\' esiques 
pr\' eservant le volume, C. R. Acad. Sci. Paris {\bf 296} (1983), 1001 -- 1003.

\bibitem{KV1984} O. Kowalski \& L. Vanhecke,
A generalization of a theorem on naturally reductive homogeneous spaces,
Oric. Amer. Math. Soc. {\bf 91} (1984), 433--435.  

\bibitem{KV1985} O. Kowalski \& L. Vanhecke,
Two point functions on riemannian manifolds,
Ann. Global Anal. Geom. {\bf 3} (1985), 95--119. 

\bibitem{KV1991} O. Kowalski \& L. Vanhecke, 
Riemannian manifolds with homogeneous geodesics, Boll. Un. Math. Ital. B 
{\bf (7) 5} (1991), 189--246.

\bibitem{P2006}
S. Philip, Penrose limits of homogeneous spaces, 
J. Geom. Phys. {\bf 56} (2006), 1516--1533.

\bibitem{S1956}
A. Selberg,
Harmonic analysis and discontinuous groups in weakly symmetric riemannian
spaces, with applications to Dirichlet series, J. Indian Math Soc. {\bf 20}
(1956), 47--87.

\bibitem{W2007}
J. A. Wolf,
``Harmonic Analysis on Commutative Spaces'', Mathematical Surveys and
 Monographs, \textbf{142}. American Mathematical Society, 2007.

\bibitem{WC2018}
J. A. Wolf \& Z. Chen,
Weakly symmetric pseudo--riemannian nilmanifolds, Journal of Differential 
Geometry {\bf 121} (2022), 541--572. 

\end{thebibliography}
\end{document}